\documentclass[10pt,a4paper]{amsart}
\usepackage{latexsym,amssymb,graphicx,amsmath,amscd}

\usepackage[all,cmtip]{xy}

\newcommand{\CC}{\Bbb C}
\newcommand{\PP}{\Bbb P}

\newtheorem{defi}{Definition}[section]
\newtheorem{pro}[defi]{Proposition}

{

\newtheorem{theo}[defi]{Theorem}

\newtheorem{exa}[defi]{Example}
\newtheorem{exas}[defi]{Examples}

\newtheorem{rem}[defi]{Remark}

\newtheorem{conj}[defi]{Conjecture}
\newtheorem{conjs}[defi]{Conjectures}
\newtheorem{que}[defi]{Question}

\title{Order $1$  congruences of lines with smooth fundamental scheme}
\author{Christian Peskine }

\begin{document}
\begin{abstract} 
 In this note we present a notion of fundamental scheme for Cohen-Macaulay, order $1$, irreducible congruences of lines.
We show that such a congruence is formed by the $k$-secant lines to its fundamental scheme  for a number $k$
that we call the secant index of the congruence. If the fundamental scheme $X$ is a smooth connected variety in ${\Bbb P}^N$,
then $k=(N-1)/(c-1)$ (where $c$ is the codimension of $X$) and $X$ comes equipped with a special tangency divisor cut out by a virtual hypersurface 
of degree $k-2$ (to be precise, linearly equivalent to a section by an hypersurface of degree $(k-2)$ without being cut by one).
This is explained in the main theorem of this paper. This theorem is followed by a complete classification of known locally Cohen-Macaulay 
order $1$ congruences of lines with smooth
fundamental scheme. To conclude we remark that according to Zak's  classification of Severi Varieties and Harthsorne 
conjecture for low codimension varieties, this classification is complete. 
\end{abstract}

\maketitle

\small{Keywords : congruences of lines, fundamental scheme, Grassmann varieties}

\section{Introduction}

As usual ${\PP}={\PP}^{N}={\PP}(V)$, where $V$ is a complex vector space of dimension $(N+1)$, is the complex projective space. 
A congruence of lines of ${\PP}$ is an $(N-1)$-dimensional variety (reduced scheme)  embedded in the Grassmann variety of lines $G(1,N)=G(2,V)$. 

We recall the  following classical notations.

\begin{defi}  Let $\Sigma \subset G(2,V)$ be a congruence of lines.

1) The order $o(\Sigma)$ of $\Sigma$  is the number of lines of $\Sigma$ passing through a general point of ${\PP}$.

2) The (set theoretical) fundamental locus $X( \Sigma )$ of $\Sigma$  is the closed set formed by all points $x\in {\PP}$ through which pass infinitely many lines of $\Sigma$.

\end{defi}

\begin{exas} Let $C \subset {\PP}^3$ be a smooth projective curve of degree $d$ and genus $g$. The $2$-secant lines to $C$ form a  congruence of lines  $\Sigma_2(C)\subset G(1,3)$. The order of this congruence is $o(\Sigma_2(C)) = d(d-3)/2 - (g-1)$ and the (set theoretical) fundamental locus contains $C$.

- The $2$-secant lines to a twisted cubic curve form a congruence of order $1$ whose (set theoretical) fundamental locus is the curve itself.

- The $2$-secant lines to a normal  elliptic curve $C \subset \PP^3$ form a congruence of order $2$. Its fundamental locus is the union of the curve 
itself and of $4$ points outside the curve,  vertices of cones of $2$-secant lines
\end{exas}

\vspace{0,3cm}

To our knowledge the fundamental locus is classically  defined as a set of points. In this note, we claim that  Cohen-Macaulay irreducible congruences of order $1$  have a well defined fundamental scheme  and we state and prove a theorem which hopefully justifies this point of view. 
My interest for order $1$ congruences of lines goes back to Zak's classification of Severi Varieties (see  \cite{Zak}). In particular I have discussed often  with Fyodor Zak about the congruences of $3$-secant lines to the projected Severi Varieties (also studied by Iliev and Manivel,
see (\cite{Il-Ma}). I remember with great pleasure the day,
many years ago, when discussing with F. Han and F. Zak we convinced ourselves that the congruence of $2$-secant lines to a twisted cubic $C$ was in fact the congruence of $4$-secant lines to the full first infinitesimal neighborhood of $C$ and that furthermore quadric hypersurfaces  cut a "non-complete linear system" on this infinitesimal neighborhood. This paper is in many aspects a partial survey of the pleasant
discussions I have had with F. Han (see \cite{Han})  and F. Zak since this discovery. Fyodor Zak has been particularly generous with
his time and his friendly critics. I wish to thank him and Jean Valles for helping me to write down these notes.

\section{Notations and Examples}
Let us start by organizing our notations. To this aim, we recall the Euler complex  on ${\PP}$ 
$$0\rightarrow  \Omega_{\PP^N} \rightarrow  V\otimes O_{\PP}(-1) \rightarrow  O_{\PP} \rightarrow  0,$$
and the  tautological complex of vector bundles on $G(2,V)$
$$0\rightarrow  K^*  \rightarrow  V\otimes O_{G(2,V)} \rightarrow  Q \rightarrow  0,$$
where $Q$ is the canonical quotient rank-$2$ vector bundle on $G(2,V)$.

\vspace{0,3cm}

We denote  by $ {\mathcal  I} \subset G(1,N) \times {\PP}^{N}$ the incidence variety line/point. We recall that
$$q : {\mathcal  I} = {\PP}_{G(1,N)} (Q)\rightarrow G(1,N)$$ is a projective line bundle on the one hand, and that  
$$p : {\mathcal  I} = {\PP}_{{\PP}^{N}}( \Omega_{\PP^N}(2))\rightarrow {\PP}^N$$ is a projective  (${\PP}^{N-1}$)-bundle  on the other hand.

If $x\in {\PP}^N$, we   write $ {\PP}^{N-1}(x)$ for the  fiber  $p^{-1}(x)$  and 
$$\Sigma(x)=p^{-1}(x) \cap q^{-1}(\Sigma)$$
for the scheme  of lines of $\Sigma$ passing through  $x$.
When $o(\Sigma) \neq 0$,  it is the degree of the generically finite morphism $q^{-1}(\Sigma) \rightarrow {\PP}^N$.

As a last set of  notations, we denote the tautological line bundles on $\PP^N$ and   $G(1,N)$ (Pl$\ddot{\mathrm{u}}$cker embedding) by
$$O_{{\PP}^{N}}(1)=O_{{\PP}^{N}}(\theta) \mbox{  and  } O_{G(1,N)}(1) =O_{G(1,N)}(\eta);$$ 
hoping to avoid too many stars, we write 
$$O_{\mathcal  I}({k}\eta, l\theta)= O_{\mathcal  I}({k}\eta + l\theta) =  q^*(O_{G(1,N)}(k\eta))  \otimes_{O_{\mathcal  I}} p^*(O_{{\PP}^{N}}(l\theta)). $$

To conclude this section, we study a list of examples, with a special interest in  $\Sigma(x)$, the  family of lines of $\Sigma$  passing through a (sometimes general) point $x$ of the fundamental locus. Our interest in this "fiber" will be explained and justified when we introduce the  ``fundamental scheme" of the congruence. We present these examples in three separated groups. To be precise the congruences we describe are all congruences of $k$-secant lines to classically known varieties for $4\geq k\geq 2$. We choose, it will be justified later, to organize these examples following the number $k$.

To avoid any misunderstanding, let us begin by being precise about what is a $k$-secant line to a variety.

\begin{defi} Let $X \subset \PP$ be a projective variety and $L \subset \PP$ a line. 
If $L \nsubseteq X$, we say that $L$ is a  $k$-secant line to $X$ if the finite scheme $L \cap X$ has degree $\geq k$. 

The $k$-secant lines to $X$ which are not contained in $X$ form a well defined quasi-projective subscheme of the Grassmann variety (see \cite{Gr-Pe} for example) of lines in $\PP$. The closure $\mathrm{Sec}_k(X)$ in the Grassman Variety of this quasi-projective scheme is the $k$-secant scheme to $X$.
\end{defi}

From this definition, it is clear that if $L\subset X$, then $\{L\} \in \mathrm{Sec}_2(X)$, but $\{L\}$ is not necessarily in $\mathrm{Sec}_k(X)$for $k\geq 3$.  For example a Palatini $3$-fold in $\PP^5$ is ruled over a cubic surface, but the family of  lines of the ruling and $\mathrm{Sec}_4(X)$ are disjoint varieties in the Grassmann Variety.

\begin{exas} 
1) If $C \subset \PP^3$ is a twisted cubic  and $\Sigma_2(C) = \mathrm{Sec}_2(C) \subset G(1,3)$ is the order $1$ congruence of $2$-secant lines to $C$, then 
 $\Sigma_2(C)(x)  \subset  \PP^2(x) \subset G(1,3)$ is a conic for all $x\in C$.

2) If $C=L_1 \cup L_2  \subset \PP^3$ is the disjoint union of two lines  and $\Sigma_2(C) \subset G(1,3)$ is the order $1$ congruence of lines intersecting $L_1$ and $L_2$, then  $\Sigma_2(C)(x) \subset \PP^2(x)  \subset G(1,3)$  is a line for all $x\in C$. Note that $L_1,L_2 \notin \Sigma_2(C)$.

3) Let $X \subset \PP^5$ be a normal rational ruled surface (of degree $4$)  without exceptional line. If $\Sigma_2(X) \subset G(1,5)$ is the order $1$ congruence of $2$-secant lines to $X$, then $\Sigma_2(X)(x) \subset \PP^4(x) \subset G(1,5)$  is a ruled cubic surface for all $x\in X$.
Note here that the lines of the ruling are indeed elements of the congruence $\Sigma_2(X)$.

\end{exas}

We note that in the two first examples $\Sigma_2(C)(x)=\PP^1$  for $x \in C$, but embedds as a conic  in one case and as a line 
in the other case. This difference will be explained by the structure of the fundamental schemes of these two congruences.

The last of these $3$ examples was communicated to me by E. Mezzetti and P. de Poi.

\begin{exas} 

1) If $S  \subset \PP^4$ is a projected smooth Veronese surface and $\Sigma_3(S) \subset G(1,4)$ is the  congruence of $3$-secant lines to $S$, then $o(\Sigma_3(S))=1$ and $\Sigma_3(S)(x) \subset \PP^3(x) \subset G(1,4)$  is a line for all $x\in S$.

2) If $B  \subset \PP^4$ is a Bordiga surface and $\Sigma_3(B) \subset G(1,4)$ is the  congruence of $3$-secant lines to $B$, then $o(\Sigma_3(B))=1$ and $\Sigma_3(B)(x) \subset \PP^3(x)  \subset G(1,4)$  is a twisted cubic for a  \texttt{general}  point $x\in B$.
\end{exas}

We recall that a Bordiga surface in $\PP^4$ is cut out by the $0$-th Fitting ideal of a (general enough) $3\times 4$ matrix with linear coefficients.

\begin{exas} 

1) If $X  \subset \PP^5$ is a Palatini $3$-fold and $\Sigma_4(X) \subset G(1,5)$ is the  congruence of $4$-secant lines to $X$, then $o(\Sigma_4(X))=1$ and $\Sigma_4(X)(x) \subset G(1,5)$  is a line for all $x\in X$.

2) If $Sc \subset \PP^5$ is the scroll over a $K_3$ surface cut out in $G(1,5) $ by a general $\PP^8$ of the Plucker space,  
then the congruence $\Sigma_4(Sc) \subset G(1,5)$ of $4$-secant lines to $Sc$ has order $1$ and 
$\Sigma_4(Sc)(x) \subset G(1,5)$  is a conic for all $x\in Sc$.
\end{exas}

The computation of $\Sigma_4(Sc)(x) \subset G(1,5)$ in this last example was explained to me separately by F. Zak and by P. de Poi and E. Mezzetti.

\section{The scheme structure of the fundamental locus of Cohen-Macaulay, order $1$, irreducible congruences}

From here $\Sigma$ is a  Cohen-Macaulay, order $1$, irreducible congruence of lines. Since $\Sigma$ is irreducible,
so is $q^{-1}(\Sigma)$.

Note that since $\Sigma$ is Cohen-Macaulay, so is $q^{-1}(\Sigma)$ and the finite and birational morphism  $q^{-1}(\Sigma)\setminus p^{-1}(X(\Sigma))  \rightarrow \PP^N \setminus X(\Sigma)$ is flat, hence is an isomorphism. 
Since $\Sigma$ is irreducible, so is $q^{-1}(\Sigma)$ and the fundamental locus $X(\Sigma)$ has codimension at least $2$.

We denote by $J_{\Sigma/G}$ the sheaf of ideals of $\Sigma$ in $G(1,N)$ and we consider the exact sequence
$$ 0 \rightarrow J_{\Sigma/G}(\eta)  \rightarrow O_G (\eta)   \rightarrow O_{\Sigma} (\eta)  \rightarrow 0.$$
 Recalling that $p_*(q^*O_G (\eta))= \Omega_{\PP^N} (2\theta)$, it induces obviously an exact sequence
  $$ 0 \rightarrow  p_{*}(q^*(J_{\Sigma/G}(\eta) ))  \rightarrow \Omega_{\PP^N} (2\theta) \rightarrow p_{*}(q^*(O_{\Sigma}(\eta)) ).$$
   Since   $p_{*}(q^*(O_{\Sigma}(\eta))$ is a torsion free $O_{\PP^N}$-module of rank-$1$, free outside $X(\Sigma)$  there exists  a positive number $k$ and a sheaf of ideals $J \subset O_{\PP^N}$ such that we have an exact sequence
$$ 0 \rightarrow  p_{*}(q^*(J_{\Sigma}(\eta) ))  \rightarrow \Omega_{\PP^N} (2\theta) \rightarrow J(k\theta)\rightarrow 0. \ \ \ (*)$$
 It is clear that $J \subset O_{\PP^N}$ is the sheaf of ideals of a scheme with support in $X(\Sigma)$.

\begin{defi} 1) We define the fundamental  scheme $X(\Sigma)$ of $\Sigma$ as the subscheme of $\PP^N$ whose ideal is $J$.
From now we denote this ideal by $J_{X(\Sigma)/\PP^N}$.

2) We define the number $k$ as the secant index of the congruence  $\Sigma$.

\end{defi}

As the reader understand, it is indeed possible to introduce the notion of fundamental scheme without assuming that $\Sigma$ is irreducible. This is not our choice in this short paper.

The notion of secant index in justified  by the coming theorem. 
From the exact sequence $(*)$, we keep particularly in mind the surjective map 
$\Omega_{\PP^N} (2\theta) \rightarrow J_{X(\Sigma)/\PP}(k\theta)\rightarrow 0$.
Its interpretation is the key to the next theorem.

\begin{theo}  
 \label{main1}
 
Let $\Sigma$ be a Cohen-Macaulay, order $1$, irreducible congruence of lines.

 1) $q^{-1}(\Sigma)$ is the blowing up of ${\PP}^{N}$ along the fundamental scheme $X(\Sigma)$.

 2) if $k$ is the secant index of $\Sigma$, then $q^*O_{\Sigma} (\eta) = O_{q^{-1}(\Sigma)}(k\theta-E)$, 
where $E$ is the inverse image of $X(\Sigma)$ in the blowing up.

 3) $L \in \Sigma$ if and only if $L$ is a $k$-secant line to the fundamental scheme $X(\Sigma)$.

 4) The image of the composite map $\Lambda^2 V = H^0(\Omega_{\PP^N} (2\theta) ) \rightarrow H^0(J(k\theta))$ is a linear system
of hypersurfaces of degree $k$ defining the map $q^{-1}(\Sigma) \rightarrow \Sigma \subset G(2,V) \subset \PP(\Lambda^2 V)$. 

 5) The linear system cut on $X(\Sigma)$ by hypersurfaces of degree $k-2$ is not complete, i.e.  $H^1(J_{X(\Sigma)/\PP}(k-2)) \neq 0$.
\end{theo}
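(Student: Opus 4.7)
The entire theorem is driven by the exact sequence $(*)$, and I would address the five parts together: (1) and (2) give the blowup picture, (3) and (4) are near-immediate geometric corollaries, and (5) requires a separate cohomological argument.

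For (1) and (2), I would invoke the universal property of blowing up. Pulling the surjection $\Omega_{\PP^N}(2\theta) \twoheadrightarrow J_{X(\Sigma)}(k\theta)$ back to $\mathcal{I}$ and composing with $p^*J_{X(\Sigma)}(k\theta) \hookrightarrow O_\mathcal{I}(k\theta)$ yields a map $p^*\Omega_{\PP^N}(2\theta) \to O_\mathcal{I}(k\theta)$. Since $(*)$ arises by applying $p_*$ to the restriction $O_\mathcal{I}(\eta) = q^*O_G(\eta) \twoheadrightarrow q^*O_\Sigma(\eta)$, a direct verification shows that on the subscheme $q^{-1}(\Sigma)$ this map factors through the tautological quotient $p^*\Omega_{\PP^N}(2\theta) \twoheadrightarrow O_\mathcal{I}(\eta)$. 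This factorization identifies $p^*J_{X(\Sigma)} \cdot O_{q^{-1}(\Sigma)}$ with the invertible ideal $O_\mathcal{I}(\eta - k\theta)|_{q^{-1}(\Sigma)}$; writing $E$ for the associated effective Cartier divisor yields $q^*O_\Sigma(\eta) \cong O_{q^{-1}(\Sigma)}(k\theta - E)$, which is (2). The universal property of the blowup then produces a morphism $q^{-1}(\Sigma) \to \mathrm{Bl}_{X(\Sigma)}(\PP^N)$ over $\PP^N$; it is an isomorphism on the complement of $X(\Sigma)$, and since both sides are reduced, irreducible, and proper over $\PP^N$, it is an isomorphism globally, giving (1).

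Parts (3) and (4) are quick consequences. For (3), the fibers of $q : q^{-1}(\Sigma) \to \Sigma$ are the strict transforms $\tilde L$ of lines $L \in \Sigma$; restricting $q^*O_\Sigma(\eta) = O(k\theta - E)$ to $\tilde L$ trivializes the left side, so $k(\theta \cdot \tilde L) = E \cdot \tilde L$. Since $\theta \cdot \tilde L = 1$, one reads off $E \cdot \tilde L = k$, i.e.\ $L$ is $k$-secant to $X(\Sigma)$. Conversely, if $L \not\subset X(\Sigma)$ satisfies $E \cdot \tilde L = k$, then $(k\theta - E) \cdot \tilde L = 0$, so $\tilde L$ is contained in a fiber of the morphism $q$ (whose fibers are the loci where every section of $q^*O_\Sigma(\eta)$ has degree zero), hence $L \in \Sigma$. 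For (4), the Plucker embedding $\Sigma \hookrightarrow \PP(\Lambda^2 V)$ is defined by the sections of $O_\Sigma(\eta)$ coming from $\Lambda^2 V = H^0(G, O_G(\eta))$; pulling back via $q$ and pushing down via $p$, these sections are exactly the image of $\Lambda^2 V = H^0(\Omega_{\PP^N}(2\theta))$ in $H^0(J_{X(\Sigma)}(k\theta))$ obtained from the global sections of $(*)$.

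Part (5) is the substantive cohomological statement. Twist $(*)$ by $O(-2\theta)$ to obtain
$$0 \to p_*(q^*J_\Sigma(\eta))(-2\theta) \to \Omega_{\PP^N} \to J_{X(\Sigma)}(k-2) \to 0,$$
and take the long exact sequence in cohomology. Since $H^1(\Omega_{\PP^N}) = \CC$ is generated by the hyperplane class, the conclusion will follow as soon as the map $H^1(p_*(q^*J_\Sigma(\eta))(-2\theta)) \to H^1(\Omega_{\PP^N})$ is shown to be zero, i.e.\ the hyperplane class does not lift to the subsheaf. My plan for this step is to transfer the computation to $q^{-1}(\Sigma)$: using $O((k-2)\theta - E) = q^*O_\Sigma(\eta) \otimes p^*O(-2\theta)$ together with the Leray spectral sequence for $q : q^{-1}(\Sigma) \to \Sigma$ (a $\PP^1$-bundle on which $p^*O(-2\theta)$ restricts fiberwise to $O_{\PP^1}(-2)$, so that $q_* = 0$ but $R^1q_*$ is a line bundle $M$ on $\Sigma$ computed by relative Serre duality), one identifies $H^1(\PP^N, J_{X(\Sigma)}(k-2))$ with $H^0(\Sigma, O_\Sigma(\eta) \otimes M)$. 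The ampleness of $O_\Sigma(\eta)$ on $\Sigma$ together with an explicit description of $M$ should then force this space to be nonzero; its nonzero sections are exactly the virtual hypersurfaces of degree $k-2$ cutting the special tangency divisor on $X(\Sigma)$ referenced in the abstract. The main obstacle is pinning down $M$ and ensuring non-vanishing without extra regularity assumptions on $X(\Sigma)$, as the theorem only assumes $\Sigma$ is Cohen-Macaulay.
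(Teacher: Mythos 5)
Parts (1)--(4) of your proposal follow essentially the same route as the paper: the paper deduces (1) and (2) ``immediately'' from $(*)$, and argues (3) exactly as you do, by restricting $q^*O_\Sigma(\eta)=O_{q^{-1}(\Sigma)}(k\theta-E)$ to strict transforms of lines. One caveat on your conclusion of (1): ``isomorphism away from $X(\Sigma)$, both sides reduced, irreducible and proper over $\PP^N$'' does not by itself give a global isomorphism --- the blowup morphism $\mathrm{Bl}_{X(\Sigma)}\PP^N\to\PP^N$ itself satisfies all of these conditions. What actually closes the argument is that both $q^{-1}(\Sigma)$ and $\mathrm{Bl}_{X(\Sigma)}\PP^N$ sit as \emph{integral closed subschemes} of ${\mathcal I}=\PP_{\PP^N}(\Omega_{\PP^N}(2\theta))$ (the latter via $\mathrm{Sym}(\Omega_{\PP^N}(2\theta))\twoheadrightarrow\mathrm{Sym}(J(k\theta))\twoheadrightarrow\bigoplus_n J^n(nk\theta)$), and both are the closure of the same graph $\{(x,L_x):x\notin X(\Sigma)\}$, hence coincide.

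The genuine gap is part (5). Your reduction (it suffices that $H^1(p_*(q^*J_{\Sigma}(\eta))(-2\theta))\to H^1(\Omega_{\PP^N})=\CC$ be zero) is sound, but the Leray/relative-duality plan you then sketch is not carried out: you never identify the line bundle $M$, and ``ampleness of $O_\Sigma(\eta)$ should force $H^0(O_\Sigma(\eta)\otimes M)\neq 0$'' is not a valid inference for an undetermined $M$ (an ample bundle twisted by a sufficiently negative one has no sections); moreover the proposed identification of $H^1(\PP^N,J_{X(\Sigma)}(k-2))$ with a group on $q^{-1}(\Sigma)$ requires controlling $R^1p_*$, which you do not address. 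You flag this obstacle yourself, so as written (5) is unproved. The paper's argument is far more elementary and bypasses all of this: take a general $\{L\}\in\Sigma$, so $L$ is a $k$-secant line not contained in $X(\Sigma)$ and $J_{X(\Sigma)}(k\theta)\otimes O_L\simeq O_L\oplus T$ with $T$ torsion. Restricting the surjection of $(*)$ to $L$ gives $O_L\oplus(N-1)O_L(1)\twoheadrightarrow O_L\oplus T$, and since $\mathrm{Hom}(O_L(1),O_L)=0$ the trivial summand must map isomorphically onto the factor $O_L$. Twisting by $-2\theta$ and taking $H^1$, the composite $\CC=H^1(\Omega_{\PP^N})\to H^1(\Omega_{\PP^N}\otimes O_L)\to H^1(J_{X(\Sigma)}((k-2)\theta)\otimes O_L)$ is therefore nonzero; since restriction to $L$ factors through $H^1(J_{X(\Sigma)}((k-2)\theta))$, that group is nonzero. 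You should replace your sketch for (5) by this restriction-to-a-general-line argument, or else actually compute $M$ and the relevant direct images, which is considerably harder.
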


\begin{proof} From the exact sequence $(*)$ we deduce immediately  $1$) and $2$).


To prove $3$), consider  $\{L\}\in \Sigma$. 
There is a scheme isomorphism
$$q^{-1}(\{L\}) \cap E \simeq L \cap X(\Sigma).$$
Assume that $L \subsetneq X(\Sigma)$.
Since $O_{q^{-1}(\Sigma)}(E) = O_{q^{-1}(\Sigma)}(k\theta-\eta)$, it is clear that $L$ is a $k$-secant line to $X(\Sigma)$.
Conversely, if $L$ is a $k$-secant line to $X(\Sigma)$ (not contained in $\Sigma$), then $p^{-1}(L) \subset q^{-1}(\Sigma)$ is $k$-secant to $E$, hence it  is contracted by the line bundle $O_{q^{-1}(\Sigma)}(\eta) = O_{q^{-1}(\Sigma)}(k\theta - E)$.  Consequently $\{L\} \in \Sigma $.  

$4$)  is clear.  

To prove $5$)  we intend to show that the map $$H^1(\Omega_{\PP^N}  ) \rightarrow H^1(J_{X(\Sigma)/\PP}((k-2)\theta)$$ is non zero.
Let $\{L\}\in \Sigma$ be general. Then $L$ is a $k$-secant line to $X(\Sigma)$ not contained in $X(\Sigma)$. Consequently,
$J_{X(\Sigma)/\PP}(k\theta) \otimes O_L = O_L \oplus T$ (where $T$ is a torsion (finite) sheaf on $L$).  From the construction we get a surjective map
$$\Omega_{\PP^N} (2\theta) \otimes O_L = O_L \oplus (N-1)O_L(\theta) \rightarrow J_{X(\Sigma)/\PP}(k\theta) \otimes O_L\simeq O_L\oplus T.$$
This shows that the composite map
$$\CC = H^1(\Omega_{\PP^N}  ) \rightarrow H^1(O_L(-2\theta) \oplus (N-1)O_L(-\theta)) \rightarrow H^1(J_{X(\Sigma)/\PP}((k-2)\theta) \otimes O_L)$$
is not zero. Since it factorizes through $H^1(J_{X(\Sigma)/\PP} ((k-2)\theta))$, we are done.
\end{proof}

\begin{rem} Since $o(\Sigma)=1$, a general line of $\Sigma$ is not contained in $X(\Sigma)$.

\end{rem}

The following remark and the question it brings up are obviously of interest.

\begin{rem} Dualizing the exact sequence $(*)$, we notice that the  fundamental scheme of a Cohen-Macaulay, irreducible congruence of order $1$ is the zero locus of a  section of $\Omega_{\PP^N}^\vee (k-2)$ (where $k$ is the secant index of the congruence). 
 \end{rem}
  \begin{que}
Which are the sections of  $\Omega_{\PP^N}^\vee (k-2)$ whose zero locus  is the fundamental scheme of a Cohen-Macaulay, irreducible congruence of order $1$ with secant index $k$ ?
 \end{que}
 
 It is clear that  a section of $\Omega_{\PP^N}^\vee (k-2)$ defines an embedding of the blowing up $\tilde{\PP^N}$ of $\PP^N$ along the zero locus of the section in the incidence variety. Its image in $G(1,N)$ is a congruence of order $1$ if and only if 
 
$$(L,x) \in \tilde{\PP} \Leftrightarrow (L,y) \in \tilde{\PP}, \ \     \forall y \in L.$$

The following example needs no comment. 
 
 \begin{exa} The zero locus of a section of $\Omega^\vee(-1) = \Omega^\vee (1-2)$ is a point. 
 This point, with its reduced structure, is the fundamental scheme of the congruence formed by lines through it. The secant index of this congruence is $1$.
  \end{exa}

We describe briefly the fundamental scheme and the secant index for all the examples discussed earlier. We follow the same organization in three different groups. It is important to note
immediately that the secant index of a congruence of $k$-secant lines to a smooth variety $X$ is not necessarily $k$
and the fundamental scheme of the congruence is not necessarily $X$. 
The description of the fundamental scheme in each of the coming examples makes this (as well as the fact that the secant index is a multiple of $k$) clear. All the congruences studied in the following examples are Cohen-Macaulay and irreducible (in some cases it is obvious, but not in all cases).

\begin{exas}

- The fundamental scheme of the congruence $\Sigma$ of $2$-secant lines to a twisted cubic $C$ is the first infinitesimal neighborhood of $C$, in other words  $J_{X(\Sigma)/ \PP}=J^2_{C/ \PP}$. The secant index of this congruence is $4$. For $x\in C$, we have
$\Sigma(x)=\PP^1$ and  $O_{q^{-1}(\Sigma)}(\eta) \otimes O_{\Sigma(x)} = O_{\PP^1 }(2)$.

- The fundamental scheme of the congruence $\Sigma$ formed by the lines joining two skew space lines $L_1$ and $L_2$ is
$L_1 \cup L_2$, i.e.  $J_{X(\Sigma)/\PP}= J_{(L_1 \cup L_2)/\PP}  = J_{L_1/\PP} \cap J_{L_2/\PP}$. The secant index of this congruence is $2$.  For $x \in L_1 \cup L_2$, we have $\Sigma(x)=\PP^1$ and $O_{q^{-1}(\Sigma)}(\eta) \otimes O_{\Sigma(x)} = O_{\PP^1 }(1)$.

- The fundamental scheme of the congruence  of  $2$-secant lines to a normal  rational ruled surface (without exceptional line)  $S \subset \PP^5$ is  a multiple structure of order $4$ on $S$, containing strictly the first infinitesimal neighborhood  of $S$. The secant index is $4$.

More precisely, there is an exact sequence $0 \rightarrow J_{X(\Sigma)} \rightarrow J_{S/\PP^5}^2 \rightarrow \mathcal{L}^2$, where 
$\mathcal{L}$ is the quotient of the conormal  bundle of $S$ defined by the family of $\PP^3$ tangent to $S$ along a line. 

\end{exas}

Next we come back to congruences of $3$-secant lines. Note that we get a secant index $3$ in one case and a 
 secant index $9$ in the other case. This is well explained by the description of the fundamental scheme.

\begin{exas}

- The fundamental scheme of the congruence of $3$-secant lines to a projected Veronese surface (in $\PP^4$) is the projected Veronese surface itself. The secant index is $3$.

- The ideal  of the fundamental scheme of the congruence of $3$-secant lines to a Bordiga surface  $B \subset \PP^4$ is
$J_{B/\PP^4}^3 \cap J_{P_1/\PP^4} \cap ... \cap J_{P_9/\PP^4}$ (where $(P_i)_{i=1}^9$ are the $9$ "parasitic" planes cutting a plane cubic curve in $B$). The secant index is $9$.

\end{exas}

Finally, we describe the secant index and the fundamental scheme for two examples of congruence  of $4$-secant lines.

\begin{exas}

- The fundamental scheme of the congruence of $4$-secant lines to a Palatini  $3$-fold (in $\PP^5$) is the Palatini $3$-fold itself. 
The secant index is $4$.

- The ideal of the fundamental scheme of the congruence of $4$-secant lines to a scroll $Sc \subset \PP^5$ over a $K_3$ surface is 
$J_{Sc/\PP^5}^2$. The secant index is $8$.

\end{exas}

Considering these examples, we note that the fundamental scheme is smooth (and the secant index is what it should) in the following cases:

- the congruence,  of lines passing through a point in $\PP^N$ (secant index $1$), 

- the congruence of lines joining $2$ skew lines in $\PP^3$ (secant index $2$),

- the congruence of $3$-secant lines to a projected Veronese surface in $\PP^4$ (secant index\nobreak $3$),

- the congruence of $4$-secant lines to a Palatini $3$-fold in $\PP^5$  (secant index $4$).

To conclude this section, we describe a particular (and well known) family of smooth, order $1$, congruences of lines   with smooth fundamental scheme and secant index $2$.

\begin{pro} Let $V=V_1 \oplus V_2$ be a decomposed complex vector space. The surjective homomoprhism
$\Lambda^2 V \rightarrow V_1\otimes V_2$ induces an isomorphism 
$$\PP(V_1) \times \PP(V_2) \simeq G(2,V)\cap \PP(V_1\otimes V_2).$$

The smooth congruence $\PP(V_1) \times \PP(V_2)$ so defined has order $1$,  it parametrizes the lines joining
$\PP(V_1)$ and $\PP(V_2)$ in $\PP(V)$.

The fundamental scheme of the congruence is the smooth disconnected union $\PP(V_1)  \cup \PP(V_2)$,
except if there exists an $i$ such that $V_i$ has dimension $1$,
in which case the fundamental locus is the point $ {\PP}(V_i)$.

The secant index of the congruence is $2$, except when the fundamental locus is a point, in which case the secant index is $1$.
\end{pro}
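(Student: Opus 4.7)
The plan is to identify the Segre $\PP(V_1) \times \PP(V_2)$ with $G(2,V) \cap \PP(V_1 \otimes V_2)$, verify it defines an order-$1$ congruence $\Sigma$ whose lines join the two linear subspaces, and then read off the fundamental scheme and the secant index from an explicit model of $q^{-1}(\Sigma)$ via Theorem~\ref{main1}.

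For part~1, I decompose $\Lambda^2 V = \Lambda^2 V_1 \oplus (V_1 \otimes V_2) \oplus \Lambda^2 V_2$, so that the surjection $\Lambda^2 V \to V_1 \otimes V_2$ is the natural projection. A decomposable bivector $v \wedge w \in V_1 \otimes V_2$ must have vanishing $\Lambda^2 V_1$- and $\Lambda^2 V_2$-components; writing $v = v_1 + v_2$ and $w = w_1 + w_2$ with $v_i, w_i \in V_i$, this forces $v_1 \wedge w_1 = 0$ and $v_2 \wedge w_2 = 0$, from which one reads off that the line $\langle v, w \rangle$ joins $\PP(V_1)$ to $\PP(V_2)$. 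The dimension count $(\dim V_1 - 1) + (\dim V_2 - 1) = N - 1$ confirms $\Sigma$ is a congruence, and smoothness is clear since it is a Segre variety. For the order, a general $x \in \PP(V)$ has a unique decomposition $x = x_1 + x_2$ with both $x_i \in V_i \setminus \{0\}$, giving the unique line $\overline{[x_1][x_2]}$ of $\Sigma$ through $x$.

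The key step, on which both the fundamental-scheme and secant-index statements rest, is to realise $q^{-1}(\Sigma)$ explicitly as the $\PP^1$-bundle $\PP_\Sigma\bigl(p_1^*\mathcal{O}_{\PP(V_1)}(-1) \oplus p_2^*\mathcal{O}_{\PP(V_2)}(-1)\bigr)$ over $\Sigma$, with tautological map to $\PP(V)$ sending $\bigl([x_1], [x_2], [\lambda:\mu]\bigr)$ to $[\lambda x_1 + \mu x_2]$. When both $\dim V_i \geq 2$, inspection of this map shows it is exactly the blow-up of $\PP^N$ along the reduced disjoint union $\PP(V_1) \sqcup \PP(V_2)$, and Theorem~\ref{main1}(1) then identifies this reduced scheme as $X(\Sigma)$. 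To compute $k$, I restrict the identity $q^*\mathcal{O}_\Sigma(\eta) = \mathcal{O}_{q^{-1}(\Sigma)}(k\theta - E)$ of Theorem~\ref{main1}(2) to a general fibre $L$ of $q$: the left side is trivial, $\theta|_L$ has degree $1$, and $E|_L$ has degree equal to $\deg(L \cap X(\Sigma)) = 2$, yielding $k = 2$.

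The exceptional case is handled by the same model. When $\dim V_1 = 1$, the Segre collapses to $\PP(V_2)$ and parametrises the star of lines through the single point $\PP(V_1) = \{p\}$; the bundle model becomes $\PP_{\PP^{N-1}}(\mathcal{O}(-1) \oplus \mathcal{O})$, which is $\mathrm{Bl}_p \PP^N$, so $X(\Sigma)$ is the reduced point $\{p\}$, and the same restriction-to-a-line computation (now with $\deg(L \cap \{p\}) = 1$) gives $k = 1$. The main obstacle in the argument is ensuring that $X(\Sigma)$ carries no embedded or nonreduced structure: this is precisely what the explicit blow-up description bypasses, since uniqueness of the blow-up pins down the scheme structure of $X(\Sigma)$ as soon as its underlying reduced locus has been located.
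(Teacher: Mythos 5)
The paper itself gives no argument here (``The proof is left to the reader''), so there is no official proof to compare against; your overall strategy --- identify $\Sigma$ with the Segre, exhibit $q^{-1}(\Sigma)$ as the $\PP^1$-bundle $\PP_\Sigma\bigl(p_1^*O_{\PP(V_1)}(-1)\oplus p_2^*O_{\PP(V_2)}(-1)\bigr)$, and read off $X(\Sigma)$ and $k$ from Theorem~\ref{main1} --- is the natural one and most of it is fine.

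There is, however, one genuine gap, and it sits exactly at the point that carries the content of the proposition. You claim that ``uniqueness of the blow-up pins down the scheme structure of $X(\Sigma)$ as soon as its underlying reduced locus has been located.'' That is false: a blow-up morphism does not determine its center, since $\mathrm{Bl}_J\,\PP^N\simeq\mathrm{Bl}_{J^2}\,\PP^N$ as schemes over $\PP^N$ for any ideal $J$. This is not a pedantic worry in the present paper: the whole point of the author's notion of fundamental scheme is that it is frequently a non-reduced thickening (for the twisted cubic the fundamental ideal is $J_{C/\PP}^2$, for the $K3$ scroll it is $J_{Sc/\PP^5}^2$, etc.), so ``the center is the reduced union because the blow-up is the blow-up of the reduced union'' is circular. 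Your subsequent computation of $k$ inherits the problem, because $\deg(E|_L)=\deg(L\cap X(\Sigma))$ presupposes knowing the scheme $X(\Sigma)$: had the fundamental ideal been $(J_{\PP(V_1)}\cap J_{\PP(V_2)})^2$, the same restriction-to-a-line identity would have consistently returned $k=4$.

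The fix is short and stays inside your framework: compute $p_*q^*O_\Sigma(\eta)$ directly, since by definition this sheaf equals $J_{X(\Sigma)/\PP}(k\theta)$. On the Segre, $O_\Sigma(\eta)=O(1,1)$; writing $h$ for the hyperplane class of $\PP^N$, $h_i$ for the pullbacks of the hyperplane classes of the two factors and $E_1,E_2$ for the two exceptional divisors of your bundle model, one checks $h=h_1+E_2=h_2+E_1$ (a hyperplane containing $\PP(V_2)$ pulls back to $E_2$ plus the pullback of a hyperplane of $\PP(V_1)$), whence $h_1+h_2=2h-E_1-E_2$ and
$$p_*q^*O_\Sigma(\eta)=p_*O(2h-E_1-E_2)=\bigl(J_{\PP(V_1)/\PP}\cap J_{\PP(V_2)/\PP}\bigr)(2).$$
Equivalently and even more concretely, the Pl\"ucker coordinates restricted to $\PP(V_1\otimes V_2)$ are the quadrics $x_iy_j$ ($x_i$ coordinates on $V_1$, $y_j$ on $V_2$), and these generate the saturated ideal $J_{\PP(V_1)/\PP}\cdot J_{\PP(V_2)/\PP}=J_{\PP(V_1)/\PP}\cap J_{\PP(V_2)/\PP}$, which is radical. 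This identifies $X(\Sigma)$ as the reduced disjoint union and gives $k=2$ in one stroke; the degenerate case $\dim V_i=1$ is handled the same way with the linear forms $y_j$ in place of the quadrics, giving the reduced point and $k=1$. The rest of your argument (the decomposition of $\Lambda^2V$, the identification of the lines as joins, the order count) is correct as written.
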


The proof is left to the reader.

\section{Cohen-Macaulay, Order $1$,  irreducible congruences of lines with smooth fundamental scheme}

We begin with an almost obvious result.

\begin{pro} If $\Sigma \subset G(1,N)$ is a Cohen-Macaulay, order $1$, irreducible  congruence of lines with smooth fundamental scheme $X(\Sigma)$, then
$\Sigma(x) \subset G(1,N)$ is a linear space for all $x \in X$. 

To be precise,  for $x \notin X(\Sigma)$ then $\Sigma(x) = \PP^0$;  
 for $x \in X(\Sigma)$ then $\Sigma(x) = \PP^{c-1} \subset G(1,N)$ where $c$ is the codimension in $\PP^N$ of the connected component of $X(\Sigma)$ containing $x$.
\end{pro}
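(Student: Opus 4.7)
The statement splits into an easy part and a harder part. The easy part is the case $x \notin X(\Sigma)$: by Theorem \ref{main1}(1), $p: q^{-1}(\Sigma) \to \PP^N$ is the blowing-up along $X(\Sigma)$, hence an isomorphism outside $X(\Sigma)$, so $\Sigma(x) = p^{-1}(x) \cap q^{-1}(\Sigma)$ is a single reduced point.

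For $x \in X(\Sigma)$ the plan is in two steps: use Theorem \ref{main1}(1) to identify $\Sigma(x)$ abstractly with the fibre $E_x$ of the exceptional divisor, then use Theorem \ref{main1}(2) to pin down the line bundle inducing the embedding of $\Sigma(x)$ into $G(1,N)$. Under the identification $q^{-1}(\Sigma)\simeq \tilde{\PP^N}$ the map $p|_{q^{-1}(\Sigma)}$ is the blow-up structural morphism, so $\Sigma(x)=E_x$; and since $X(\Sigma)$ is smooth of codimension $c$ at $x$, this fibre is abstractly $\PP(N_{X(\Sigma)/\PP^N,x})\simeq \PP^{c-1}$. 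For the line bundle, restrict the identity $q^*O_\Sigma(\eta) = O(k\theta - E)$ to $E_x$: the class $\theta$ is pulled back from $\PP^N$ and therefore trivial on the blow-up fibre $E_x$, while the standard blow-up relation $O_{\tilde{\PP^N}}(-E)|_E = O_E(1)$ restricts to the hyperplane bundle on $E_x$. This yields
$$O(\eta)|_{\Sigma(x)} \simeq O_{\PP^{c-1}}(1).$$

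To conclude, the morphism $\Sigma(x)\to G(1,N) \subset \PP(\Lambda^2 V)$ is defined by a linear subsystem of the complete linear system $|O_{\PP^{c-1}}(1)|$, so its image is automatically a linear subspace of $\PP(\Lambda^2 V)$. Since the embedding factors through the Schubert variety $p^{-1}(x) = \PP^{N-1}(x)$, which is a linearly embedded $\PP^{N-1}$ in $\PP(\Lambda^2 V)$, and since $\Sigma(x)\hookrightarrow p^{-1}(x)$ is a closed immersion (two distinct lines through $x$ give two distinct points of $\PP^{N-1}(x)$), the image has the full dimension $c-1$, and therefore is a linear $\PP^{c-1}$. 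The only technical point requiring care is the restriction computation $O(\eta)|_{E_x}\simeq O_{\PP^{c-1}}(1)$ and the compatible identification of $\Sigma(x)$ with the blow-up fibre; once those are in place, linearity of the embedding is forced by the mere fact that a morphism of $\PP^{c-1}$ defined by a subsystem of $|O(1)|$ is linear.
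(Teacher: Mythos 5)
Your proof is correct, and it reaches the conclusion by a mechanism slightly different from the paper's. The paper deduces the whole statement in one stroke from the exact sequence $(*)$: the surjection $\Omega_{\PP^N}(2\theta) \to J_{X(\Sigma)/\PP^N}(k\theta)$ restricts at a point $x \in X(\Sigma)$ to a surjection of fibres $(\Omega_{\PP^N}(2\theta))(x) \to (N^\vee_{X(\Sigma)/\PP^N}(k\theta))(x)$ onto a $c$-dimensional vector space (smoothness of $X(\Sigma)$), and since the closed immersion $q^{-1}(\Sigma) \subset {\mathcal I} = \PP_{\PP^N}(\Omega_{\PP^N}(2))$ is the one classified by this quotient, $\Sigma(x) = \PP(N^\vee_{X(\Sigma)/\PP^N}(x)) \subset \PP^{N-1}(x)$ is the projectivization of a linear surjection, hence a linearly embedded $\PP^{c-1}$. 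You share the first step (identifying $\Sigma(x)$ with the fibre $E_x \cong \PP^{c-1}$ of the blow-up over the smooth centre) but obtain linearity instead by computing $O(\eta)|_{E_x} \cong O_{\PP^{c-1}}(1)$ from Theorem \ref{main1}(2) together with the standard identity $O(-E)|_E = O_E(1)$ and the triviality of $\theta$ on the fibre, and then observing that a morphism of $\PP^{c-1}$ defined by a subsystem of $|O(1)|$ has linear image. Both routes are sound and ultimately rest on $(*)$; yours trades the paper's direct fibrewise linear algebra for a line-bundle computation, which avoids having to recall how the embedding $q^{-1}(\Sigma)\hookrightarrow \PP_{\PP^N}(\Omega_{\PP^N}(2))$ is induced by the quotient $J(k\theta)$, at the cost of invoking more of Theorem \ref{main1}. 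A minor streamlining: since the restricted Plücker system on $\Sigma(x)\cong\PP^{c-1}$ is a base-point-free subsystem of $|O(1)|$, the evaluation map on linear forms is already forced to be surjective, so the image is a linearly embedded $\PP^{c-1}$ without needing the separate closed-immersion argument for full dimension.
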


\begin{proof} This is a clear consequence of the exact sequence $(*)$.  Indeed, the surjective map
$\Omega_{\PP^N}(2\theta) \rightarrow J_{X(\Sigma)/\Bbb P^N}(k\theta)$  induces for all $x \in X(\Sigma)$ a surjective map
$$(\Omega_{\PP^N}(2\theta)) (x) \rightarrow (N^\vee_{X(\Sigma)/\PP^N}(k\theta))(x), $$ hence an embedding   
$$\Sigma(x)=\PP(N^\vee_{X(\Sigma)/\PP^N}(x)) \simeq \PP^{c-1}  \subset \PP^{N-1}(x)$$

\end{proof}

\begin{defi}  A 
congruence $\Sigma \subset G(2,V)$ is linear if it is cut out  in $G(2,V)$ (scheme theoretically, but not
necessarily properly) by a linear subspace of the PlŸcker space
$\PP(\wedge^2 V)$.
\end{defi}

It is clear that the order of a linear congruence is either $0$ or $1$.

\begin{rem}  

 A congruence of lines $\Sigma$ in ${\PP}^2$ is linear if and only if there exists $x\in {\PP}^2$ such that $\Sigma ={\PP}^1(x)$ parametrizes the lines through $x$.
\end{rem}

This is obvious. The case of linear congruences of order $1$ in ${\PP}^3$ is almost as easy to describe.

\begin{pro}  $\Sigma$ is a linear congruence of order $1$ of lines in ${\PP}^3$ if and only if one of the three following conditions is verified:

1) there exists a point $x \in  {\PP}^3$ such that $\Sigma = {\PP}^2(x)$ parametrizes the lines through $x$, 

2) there exist two skew lines $L_1,L_2 \subset {\PP}^3$ such that $\Sigma$ parametrizes the lines joining $L_1$ and $L_2$,

3) there exists $x\in H \subset {\PP}^3$ such that $\Sigma = {\PP}^2(x) \cup  H^*$ (where $H$ is a plane and $H^*$ the dual plane)

\end{pro}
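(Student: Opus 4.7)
The plan is to exploit the Pl\"ucker embedding $G(1,3) \subset \PP(\Lambda^2 V) = \PP^5$, under which $G(1,3)$ is realized as the smooth Klein quadric $Q$. A linear congruence is then by definition of the form $\Sigma = \Lambda \cap Q$ scheme-theoretically for some linear $\Lambda \subset \PP^5$, which I may take to be the linear span of $\Sigma$. The reverse implication is routine: in each of the cases (1)--(3) one exhibits the relevant $\Lambda$ --- a plane contained in $Q$ for case (1), and the intersection of the two linear line complexes ``meets $L_i$'' for cases (2) and (3) --- and checks that a general point of $\PP^3$ is traversed by a unique line of $\Sigma$.

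For the forward direction I would do a case analysis on $\dim \Lambda$. Since $\Sigma$ is two-dimensional and $\Lambda$ is its linear span, dimension reasons force $\dim \Lambda \in \{2,3\}$. If $\dim \Lambda = 2$ and $\Lambda \not\subset Q$, the intersection $\Lambda \cap Q$ is a conic, too small to be a congruence; so $\Lambda$ must be a plane contained in $Q$. The classical description of the two families of planes on the Klein quadric then identifies $\Lambda$ as either an $\alpha$-plane $\PP^2(x)$ or a $\beta$-plane $H^*$; since $\beta$-planes have order zero (a general point of $\PP^3$ lies on no line contained in a fixed plane), only the $\alpha$-plane is compatible with order $1$, giving case (1).

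The substantive case is $\dim \Lambda = 3$, where $\Sigma$ is a quadric surface in $\Lambda \cong \PP^3$. I would analyze this via the two-dimensional Pl\"ucker-orthogonal $W^\perp$ (with $\Lambda = \PP(W)$): the decomposable nonzero vectors in $W^\perp$ correspond to the classical \emph{directrices} of $\Sigma$, that is, the lines of $\PP^3$ met by every line of $\Sigma$, since two lines in $\PP^3$ meet if and only if their Pl\"ucker coordinates pair to zero. By Bezout, the line $\PP(W^\perp) \subset \PP^5$ meets $Q$ either in two distinct points, in a double point, or along an entire line of $Q$. In the two-point case the two directrices $L_1, L_2$ determine $\Sigma$ as the congruence of lines meeting both: if they are skew one recovers the smooth hyperboloid of case (2), while if they are coplanar and concurrent at $x \in H$, the lines meeting both are exactly $\PP^2(x) \cup H^*$, case (3). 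If $\PP(W^\perp) \subset Q$, then $W^\perp$ is a totally isotropic plane corresponding to a pencil of lines $(x, H)$, and the lines meeting every member of this pencil again form $\PP^2(x) \cup H^*$, still case (3).

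The main subtlety is the tangency case, in which $\PP(W^\perp) \cap Q$ is a single double point at some $[L]$: this produces the rank-$3$ ``parabolic'' linear congruence, a quadric cone with vertex $[L]$, which is genuinely an order-$1$ linear congruence not covered by (1)--(3). To complete the proof one must exclude this case, consistent with the implicit setting of this section, by observing that its fundamental scheme is a non-reduced thickening of the directrix line $L$ and is therefore not smooth. Modulo this last observation, the enumeration of intersection patterns of $\PP(W^\perp)$ against the Klein quadric exhausts exactly the three cases of the proposition.
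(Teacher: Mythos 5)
Your argument follows the same route as the paper's: identify $G(1,3)$ with the Klein quadric $Q\subset\PP^5$, note that a two\-dimensional linear congruence is cut out by either three or two hyperplanes, dispose of the first case via the two families of planes on $Q$ (only the $\alpha$-planes have order $1$), and analyse the pencil case through its special members. You are, however, more careful than the paper at exactly one point, and that point is the crux.

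The paper's proof asserts that the pencil of linear complexes ``contains two special linear complexes''. As you observe, this fails when the line $\PP(W^\perp)$ is tangent to $Q$: the pencil then has a single special member counted twice, and $\Sigma=\PP(W)\cap Q$ is a rank-$3$ quadric cone --- a reduced, irreducible surface of class $\sigma_1^2=\sigma_{2,0}+\sigma_{1,1}$, hence a genuine linear congruence of order $1$. (Concretely, with $q=p_{01}p_{23}-p_{02}p_{13}+p_{03}p_{12}$, the $3$-plane $\Lambda=\{p_{23}=0,\ p_{02}+p_{13}=0\}$ cuts $Q$ in the reduced cone $p_{02}^2+p_{03}p_{12}=0$, and exactly one of its lines passes through a general point of $\PP^3$.) This parabolic congruence satisfies none of (1)--(3), so the ``only if'' direction is false as stated, and your proposed repair --- discarding the case because its fundamental scheme is a non-reduced thickening of the directrix --- is not available: the proposition carries no smoothness (nor any other) hypothesis on the fundamental scheme, so nothing in the statement licenses the exclusion. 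You must either add a fourth case to the statement (the cone of lines belonging to a non-special complex and meeting a fixed line of that complex) or import an explicit extra hypothesis; as written, your proof ends with an appeal to an ``implicit setting'' that the statement does not contain. A minor further point: in your ``two distinct points'' subcase the concurrent-directrices alternative is vacuous, since two intersecting lines have vanishing Pl\"ucker pairing and would force $\PP(W^\perp)\subset Q$; this is harmless because both roads lead to (3), but it should be tightened.
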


\begin{proof} Indeed $G(1,3)$ is a quadric in ${\PP}^5$, hence a linear congruence will be cut out by $2$ or $3$ hyperplanes. If the congruence $\Sigma$ is cut out by $3$ hyperplanes, the congruence is a plane. Since the lines in a plane form a congruence of order $0$, only case $1$ can occur. 

 If  the congruence is cut by a pencil of hyperplanes,  this pencil  contains two special linear complexes. If the two corresponding lines are disjoint we are in case $2$, if they intersect (in a point $x$), $3)$ holds. 
 
 \end{proof}
 
Note here that the congruence $\Sigma = {\PP}^2(x) \cup  H^*$ described in $3)$ is Cohen-Macaulay but (obviously) not irreducible. It is in fact the union of a smooth, linear, irreducible congruence of order $1$ and a smooth, linear, irreducible congruence of order $0$. This example explains why  we prefer irreducible congruences.

The following question was raised by Fyodor Zak.

\begin{que}  Are the two following conditions equivalent ?

1) $\Sigma $ is a linear congruence,

2) For every $x\in {\PP}^N$, the scheme  $\Sigma(x)$ is a linear subspace of ${\PP}^{N-1}(x)$.
\end{que}

\vspace{0,3cm}

Time has come to state and prove the main theorem of this paper.

\begin{theo}

 \label{main2}
Let  $\Sigma \subset G(1,N)$ be an order $1$, Cohen-Macaulay, irreducible congruence of lines with smooth fundamental scheme $X( \Sigma )$ and secant index $k$.

1) If $k\leq 2$, the fundamental locus is either a point ($k=1$) or a union of $2$ complementary linear spaces ($k=2$).

2)  If $k \geq  3$, then $X(\Sigma)$ is connected and $k=(N-1)/(c-1)$ where $c$ is the codimension of $X(\Sigma)$ in $\PP$.

3) $K_{\Sigma}=O_{\Sigma}(-c)$.

4) 
The linear system cut out on $X(\Sigma)$ by hypersurfaces of degree $k-2$  is not complete, i.e.  $H^1(J_{X(\Sigma)/\PP}(k-2)) \neq 0$.

The scheme $D= \{x\in X(\Sigma), \ \  \Sigma (x) \subset T_{X,x}\}$  is the zero variety of a section of $O_X(k-2)$ not cut out by a hypersurface  of degree $k-2$. 

Its inverse image in the divisor $E \subset  q^{-1}({\Sigma})$ is the ramification locus of the finite  (degree $k$) map  $E \rightarrow \Sigma $.

\end{theo}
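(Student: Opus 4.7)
My plan hinges on a canonical class calculation for $\Sigma$ that simultaneously yields the numerical formula in Part 2, the formula for $K_\Sigma$ in Part 3, and the ramification description in Part 4. For Part 1 I use the Remark following Theorem \ref{main1}: $X(\Sigma)$ is the zero scheme of a section $\tau\in H^0(\Omega_{\PP^N}^\vee(k-2))$. For $k=1$ the Euler sequence gives $H^0(\Omega_{\PP^N}^\vee(-1))\simeq V$, and the zero scheme of a nonzero section is a single point of $\PP(V)$, so $X(\Sigma)$ is a point. For $k=2$, $H^0(\Omega_{\PP^N}^\vee)$ is the Lie algebra of infinitesimal projective automorphisms; smoothness of the zero scheme forces the corresponding endomorphism to be semisimple, so its vanishing locus is a disjoint union $\PP(V_1)\sqcup\cdots\sqcup\PP(V_r)$ over distinct eigenspaces. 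A direct order-$1$ check (a generic point of $\PP^N$ lies on a unique line meeting two of the $\PP(V_i)$'s only when no third $V_j$ contributes, i.e.\ $V=V_i\oplus V_j$) forces $r=2$ with $V_1$ and $V_2$ complementary of dimensions $\geq 2$.

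For Parts 2 and 3 I compute $q^*\omega_\Sigma$ in two ways from the double description of $q^{-1}(\Sigma)$ in Theorem \ref{main1}. As the blowup of $\PP^N$ along a smooth connected $X(\Sigma)$ of codimension $c$,
\[ \omega_{q^{-1}(\Sigma)}=-(N+1)\theta+(c-1)E. \]
As the restriction of the $\PP^1$-bundle $\mathcal{I}=\PP_{G(1,N)}(Q)\to G(1,N)$, whose relative dualizing sheaf is $O_{\mathcal{I}}(\eta-2\theta)$, I get
\[ \omega_{q^{-1}(\Sigma)/\Sigma}=O\bigl((k-2)\theta-E\bigr) \]
after substituting $\eta=k\theta-E$. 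Subtracting,
\[ q^*\omega_\Sigma \;=\; -(N+k-1)\theta+cE \;=\; (ck-N-k+1)\theta-c\eta. \]
Since $q^*\omega_\Sigma$ has degree $0$ on every $\PP^1$-fiber of $q$ (on which $\theta$ has degree $1$ and $\eta$ has degree $0$), one needs $ck-k-N+1=0$, i.e.\ $k=(N-1)/(c-1)$, which is Part 2; plugging back yields $q^*\omega_\Sigma=-c\eta$, and injectivity of $q^*$ on Picard groups gives $K_\Sigma=O_\Sigma(-c)$, which is Part 3.

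The connectedness assertion in Part 2 is the main obstacle. The analogous computation with $X(\Sigma)=\bigsqcup_i X_i$ of codimensions $c_i$ and generic intersection multiplicities $k_i$ (with $\sum k_i=k$) yields only the single constraint $\sum(c_i-1)k_i=N-1$, which does not immediately exclude $r\geq 2$. My plan to rule this out when $k\geq 3$ is a reduction to the $k=2$ analysis of Part 1: the existence of a line of $\Sigma$ meeting two components generically, combined with uniqueness of the line through a generic point of $\PP^N$ and the linear structure of $\Sigma(x)$ established in the proposition just before this theorem, should force the components to be complementary linear spaces with $k_1=k_2=1$, hence $k=2$, contradicting $k\geq 3$.

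For Part 4, the non-completeness $H^1(J_{X(\Sigma)/\PP}(k-2))\neq 0$ is already Theorem \ref{main1}(5). To describe $D$, I apply Riemann--Hurwitz to the finite degree-$k$ map $\phi=q|_E\colon E\to\Sigma$. Adjunction and Part 3 give $\omega_E=(-(N+1)\theta+cE)|_E$ and $\phi^*\omega_\Sigma=(-c\eta)|_E=(-ck\theta+cE)|_E$, whence
\[ R \;=\; K_E-\phi^*K_\Sigma \;=\; (ck-N-1)\theta|_E \;=\; (k-2)\,\pi^*O_X(1), \]
using $ck=N+k-1$; here $\pi\colon E\to X(\Sigma)$ is the exceptional bundle projection, so $R=\pi^{-1}(D)$ for a divisor $D\in|O_X(k-2)|$. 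Geometrically, $\phi$ ramifies at $(x,L)$ precisely when $L\subset T_{X,x}$, so the whole fiber $\pi^{-1}(x)=\Sigma(x)$ lies in $R$ iff $\Sigma(x)\subset T_{X,x}$, identifying $D$ as in the statement. That $D$ is not cut by a hypersurface of degree $k-2$ follows from the non-completeness by tracking that the defining section of $D$ represents the non-trivial class in $H^1(J_{X(\Sigma)/\PP}(k-2))$ produced in the proof of Theorem \ref{main1}(5).
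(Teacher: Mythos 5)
Your overall architecture tracks the paper's closely: the double computation of $K_{q^{-1}(\Sigma)}$ (blowup versus $\PP^1$-bundle over $\Sigma$) is exactly the paper's proof of Part 3, and your Riemann--Hurwitz identification of $R=(k-2)\theta|_E$ and of $D$ reproduces the paper's first step in Part 4. One place where you genuinely depart from (and arguably improve on) the paper: you extract $k=(N-1)/(c-1)$ by observing that $q^*\omega_\Sigma=(ck-N-k+1)\theta-c\eta$ must have degree $0$ on the $\PP^1$-fibers of $q$, whereas the paper invokes the general projection theorem of Gruson--Peskine as a black box. Your fiber-degree argument is self-contained and correct, granted connectedness, which both you and the paper need first; your sketched reduction of the disconnected case to the $k=2$ analysis is in the same spirit as the paper's (lines joining two components form a family of dimension at most $N-1$, so $\Sigma$ would be a join, forcing two complementary linear spaces and $k=2$), though neither write-up is fully detailed on this point.

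The genuine gap is at the end of Part 4. You assert that $D$ is not cut out by a hypersurface of degree $k-2$ because this ``follows from the non-completeness by tracking that the defining section of $D$ represents the non-trivial class in $H^1(J_{X(\Sigma)/\PP}(k-2))$.'' Non-completeness only tells you that \emph{some} divisor in $|O_X(k-2)|$ fails to be cut by hypersurfaces; it does not by itself single out $D$, and the ``tracking'' is precisely the missing argument. The paper closes this with a specific construction (attributed to F.~Han): the relative Euler sequence $0 \to O_{q^{-1}(\Sigma)}(\eta-2\theta) \to Q(-\theta) \to O_{q^{-1}(\Sigma)} \to 0$ of the bundle $q^{-1}(\Sigma)\to\Sigma$ fits into a $3\times 3$ commutative diagram exhibiting $J_{R/q^{-1}(\Sigma)}((k-2)\theta)$ as the quotient of $Q(-\theta)$ by $O_{q^{-1}(\Sigma)}(\eta-k\theta)$, from which one reads off $H^0(J_{R/q^{-1}(\Sigma)}((k-2)\theta))=0$; since $R=\pi^{-1}(D)$, no degree-$(k-2)$ hypersurface of $\PP^N$ contains $D$. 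Without this diagram, or some other explicit identification of the section cutting out $D$ with a class not in the image of $H^0(O_{\PP^N}(k-2))$, your final sentence is a statement of intent rather than a proof.
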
  

\begin{proof}  The proof of $1)$ is straightforward. 

 To prove $2)$, note that if $X(\Sigma)$ is not connected then it must have two connected components such that the lines of $\Sigma$
 join the two components. But the lines parametrizing the join  of  two varieties form a family of dimension at most $N-1$. It has to be the congruence $\Sigma$, and it implies that $X(\Sigma)$ is the union of two linear spaces and $k=2$. A contradiction.
 
 From the general projection theorem (see \cite{Gr-Pe}) we know that if the $k$-secant lines to a connected  smooth variety in $\PP^N$ form a congruence of lines, then $k=(N-1)/(c-1)$. This proves  $2)$.

$3)$ is proved by computing twice the canonical line bundle  $K_{q^{-1}(\Sigma)}$. On the one hand 
$q^{-1}(\Sigma)=\PP_{\Sigma}(Q\mid \Sigma)$ and this implies $$K_{q^{-1}(\Sigma)}=q^{*}K_\Sigma \otimes O_{q^{-1}(\Sigma)}(\eta - 2\theta).$$ On the other hand   $q^{-1}(\Sigma)$ is the blowing up of $\PP$ along $X(\Sigma)$ and this proves $$K_{q^{-1}(\Sigma)} = O_{q^{-1}(\Sigma)}(-(N+1)\theta + (c-1)E).$$ Since
$O_{q^{-1}(\Sigma)}(E)=O_{I_{\Sigma}}(k\theta - \eta)$  we find
$$q^{*}K_\Sigma(\eta - 2\theta) = O_{q^{-1}(\Sigma)}(-2\theta - [N-1-(c-1)k]\theta - [c-1]\eta)$$
which proves $4)$ (by using $2)$).

 $4)$ The first assertion has already been proved without assuming that $X(\Sigma)$ is smooth.
 
Concerning the second assertion, we note first that an elementary computation proves that the ramification $K_E\otimes q^{*}K_\Sigma^\vee$ of the generically finite map $E \rightarrow \Sigma$ is  a section of $O_E((k-2)\theta)$. We claim that the ramification is not cut out by a hypersurface of degree $k-2$ of $\PP$. Indeed, following an idea of F. Han, we consider the relative Euler complex 
$$ 0 \rightarrow O_{q^{-1}(\Sigma)}(\eta - 2\theta) \rightarrow  Q(-\theta)  \rightarrow O_{q^{-1}(\Sigma)} \rightarrow 0$$
of the bundle map $q^{-1}(\Sigma) \rightarrow \Sigma$. It fits in the following commutative diagram, with exact rows and columns:
$$\begin{CD}
@. @.  0@. 0 @.\\
@. @. @VVV @VVV \\
@.  @.  O_{q^{-1}(\Sigma)}( \eta - k\theta) @= O_{q^{-1}(\Sigma)}( \eta - k\theta) @.\\
@. @. @VVV @VVV \\
0@ >>>    O_{q^{-1}(\Sigma)}( \eta - 2\theta)  @>>> Q(-\theta) @>>> O_{q^{-1}(\Sigma)} @ >>> 0\\
@. @|  @VVV @VVV \\
0  @ >>>  O_{q^{-1}(\Sigma)}( \eta - 2\theta) @ >>> J_{R/q^{-1}(\Sigma)}((k-2)\theta) @>>>  O_E @>>> 0\\
@.@. @VVV @VVV \\
 @.    @. 0@.  0 @.\\
\end{CD}$$
where $J_{R/q^{-1}(\Sigma)}$ is the ideal of the ramification in $q^{-1}(\Sigma)$. This diagram proves that 
$H^0(J_{R/q^{-1}(\Sigma)}((k-2)\theta))=0$ and confirms that $J_{R/E}((k-2)\theta)=O_E$. Since
the divisor $R\subset E$ is the inverse image of the divisor $D\subset X(\Sigma)$, we are done.

\end{proof}

\begin{rem} The  surjective homomorphism
$\Omega _{\PP^N}(2\theta)\otimes O_{X(\Sigma)} \rightarrow N_{X(\Sigma)/\PP^N}^\vee(k\theta)$ 
defines a map  from  $X(\Sigma)$ to the Fano variety of linear spaces of dimension $c-1$ in $\Sigma$ (Zak's map). 

This map is easily proved to be an isomorphism (communicated by F. Zak) but this is not the subject of this paper.

\end{rem}

The above theorem comes with two natural conjectures that I failed to prove (very irritating!).

\begin{conjs} If $\Sigma$ is as in the theorem, then 

1)  $\Sigma \subset G(2,V) \subset\PP(\Lambda^2 V)$ is linearly normal (see \cite{Il-Ma})

2) $X(\Sigma)$ is $k$-regular (Castelnuovo-Mumford regularity).

\end{conjs}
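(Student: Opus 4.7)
The plan is to translate both conjectures into cohomology-vanishing statements on $\PP^N$ via the exact sequence $(*)$ of the main theorem, and then to confront the problem of controlling the intermediate cohomology of the kernel sheaf $\mathcal{K}:=\ker\bigl(\Omega_{\PP^N}(2\theta)\twoheadrightarrow J_{X(\Sigma)}(k\theta)\bigr)$.

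For conjecture 1), linear normality of $\Sigma\subset\PP(\Lambda^2 V)$ is equivalent, because $G(2,V)\subset\PP(\Lambda^2 V)$ is itself linearly normal, to the vanishing $H^1(G(2,V),J_{\Sigma/G}(\eta))=0$. I would apply $p_*q^*$ to the short exact sequence $0\to J_{\Sigma/G}(\eta)\to O_G(\eta)\to O_\Sigma(\eta)\to 0$, using that $q:\mathcal{I}\to G(2,V)$ is a $\PP^1$-bundle together with the identifications $p_*q^*O_G(\eta)=\Omega_{\PP^N}(2\theta)$, $p_*q^*O_\Sigma(\eta)=J_{X(\Sigma)}(k\theta)$, and the vanishing $R^1p_*q^*O_\Sigma(\eta)=0$ (which holds because $q^{-1}(\Sigma)\to\PP^N$ is the blowup of a smooth centre). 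This recovers $(*)$ and, via Leray, yields an isomorphism $H^1(G,J_{\Sigma/G}(\eta))\simeq H^1(\PP^N,\mathcal{K})$, so conjecture 1) reduces to $H^1(\PP^N,\mathcal{K})=0$.

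For conjecture 2), $k$-regularity amounts to $H^i(J_{X(\Sigma)}((k-i)\theta))=0$ for every $i\geq 1$. Twisting $(*)$ by $O_{\PP^N}(-i\theta)$ and invoking Bott's formulas, which kill $H^j(\Omega_{\PP^N}((2-i)\theta))$ for $0<j<N$ apart from the single one-dimensional exception $(j,i)=(1,2)$ that is exactly the source of the non-completeness in part 4) of the main theorem, each required vanishing reduces to suitable vanishings of $H^i(\mathcal{K}(-i\theta))$ and $H^{i+1}(\mathcal{K}(-i\theta))$. I would attack these on two fronts. First, the description of $X(\Sigma)$ as the zero locus of a section of $T_{\PP^N}(k-2)$ (remarked in the paper) yields an Eagon-Northcott-type resolution of $J_{X(\Sigma)}(k\theta)$ by twists of $\Omega^p_{\PP^N}$, to which Bott applies term by term and which, spliced with $(*)$, gives in principle a resolution of $\mathcal{K}$. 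Second, since $K_\Sigma=O_\Sigma(-c)$ by part 3), Kawamata-Viehweg gives $H^i(\Sigma,O_\Sigma(m\eta))=0$ for $i\geq 1$ and $m\geq 1-c$, and transporting these through $q$ and the blowup formula $\tilde p_*O(k\theta-E)=J_{X(\Sigma)}(k\theta)$ should produce complementary vanishings for twists of $J_{X(\Sigma)}$ on $\PP^N$.

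The main obstacle, as I see it, is that $(*)$ alone does not determine $H^j(\mathcal{K}(-j\theta))$ for small $j$, and these are exactly the terms governing 1) (the case $j=0$ above) and the hardest cases $i=1,2$ of 2). The Eagon-Northcott route is delicate because $X(\Sigma)$ is not a complete intersection and the section of $T_{\PP^N}(k-2)$ cuts it out with the particular multiplicity encoded by $k-2$, so the resulting complex is neither short nor self-dual; the Kawamata-Viehweg route, on the other hand, controls $H^i(J_{X(\Sigma)}(m))$ only for $m$ comparable to $k$ or larger, which may miss exactly the degrees $m=k-1,\,k-2$ that the conjectures require. Closing this gap uniformly in $N$, $c$ and $k=(N-1)/(c-1)$, presumably by exploiting the specific geometry of $\Sigma(x)\subset T_{X,x}$ encoded by the divisor $D$ of part 4), is where the real work will lie and is plausibly what has so far resisted proof.
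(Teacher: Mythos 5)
You should be aware at the outset that the statement you are addressing is one of the two conjectures the paper explicitly leaves open (the author writes that these are ``two natural conjectures that I failed to prove''), so there is no proof in the paper to compare your attempt against, and your text is itself candid that it is a strategy rather than a proof. That said, your reductions are essentially correct. For 1), since $G(2,V)$ is linearly normal and nondegenerate in $\PP(\Lambda^2V)$ and $H^1(O_G(\eta))=0$, linear normality of $\Sigma$ is indeed equivalent to $H^1(J_{\Sigma/G}(\eta))=0$; and because $q$ is a $\PP^1$-bundle, $R^{>0}p_*q^*O_G(\eta)=0$, and $R^{>0}\tilde p_*\,O_{q^{-1}(\Sigma)}(k\theta-E)=0$ for the blowup of a smooth centre, Leray does give $H^1(G,J_{\Sigma/G}(\eta))\simeq H^1(\PP^N,\mathcal{K})$ with $\mathcal{K}=p_*q^*J_{\Sigma/G}(\eta)$. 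For 2), the translation of $k$-regularity through the twisted sequence $(*)$ together with Bott vanishing is also correct, and it correctly isolates $H^1(\Omega_{\PP^N})=\CC$ as the class responsible for the non-vanishing $H^1(J_{X(\Sigma)/\PP}(k-2))\neq 0$ of part 4) of the main theorem, which sits just below the range that $k$-regularity requires.

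The genuine gap is the one you name yourself: both conjectures are reduced to vanishing of intermediate cohomology of $\mathcal{K}$ in specific twists, and neither of your two proposed routes for establishing those vanishings is carried out, nor is it clear that either can be. The Eagon--Northcott route is worse than ``delicate'': the section of $\Omega^\vee_{\PP^N}(k-2)$ whose zero locus is $X(\Sigma)$ vanishes in codimension $c$, whereas the expected codimension for a section of a rank-$N$ bundle is $N$, so the associated Koszul/Eagon--Northcott complex is very far from exact and does not resolve $J_{X(\Sigma)}$; extracting a usable resolution of $\mathcal{K}$ from it would require structural information about $X(\Sigma)$ that is essentially equivalent to the conjectures. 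The Kawamata--Viehweg route controls $H^i(\Sigma,O_\Sigma(m\eta))$, which after pushing down governs twists of $J_{X(\Sigma)}$ by multiples of $k$ (via $O(k\theta-E)$), not the critical degrees $k-1$ and $k-2$. So the proposal is a correct reformulation of the conjectures, not a proof of them; the only complete arguments currently available are case-by-case verifications for the classified examples (secant index $\leq 3$ and the two Palatini congruences), which is presumably why the paper presents the statements as conjectures supported by that classification rather than as theorems.
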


These two conjectures are perhaps justified,  more probably explained,  by  the classification of all  order $1$ congruences with smooth fundamental scheme and secant index $\leq 3$ and the description of the  two known examples with secant index $4$. 


\begin{theo}   (Classification Theorem) Let $\Sigma \subset G(1,N)$ be an order $1$, Cohen-Macaulay, irreducible congruence of lines with smooth fundamental scheme.
Let $k$ be the secant index of $\Sigma$.

1) If $k=1$, then $\Sigma = {\PP}^{N-1}(x)$, with 
$x \in {\PP}^N$ and  $X(\Sigma) = \{ x \}$. 

The ramification divisor is empty and cut out by a non zero section of $O_{\{x\}}(-1)$

2) If $k=2$, 
there exists a decomposition $W=V_1\oplus V_2$ with $\dim_{\CC} (V_i) \geq 2$ and 
$\Sigma ={\PP}(V_1)\times {\PP}(V_2) = {\PP}(V_1\otimes V_2) \cap G(1,N) \subset {\PP}(\Lambda V)$ and $X(\Sigma) = {\PP}(V_1) \cup  {\PP}(V_2)$.

The ramification divisor is empty and cut out by an everywhere non zero section of $O_{X(\Sigma)}$.

3) If $k=3$,  $\Sigma $ is the congruence  of $3$-secant lines to a projected Severi variety $S=X( \Sigma)$.

The ramification divisor $D$ is cut out in $ X(\Sigma)$ by a "virtual  hyperplane", i.e. $O_{X(\Sigma)}(D) = O_{X(\Sigma)}(\theta)$ but
$D$ is not cut out by an hyperplane in $ X(\Sigma)$.

\end{theo}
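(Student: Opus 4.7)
For the cases $k=1$ and $k=2$, the classification is an immediate corollary of Theorem~\ref{main2}. If $k=1$, part~1) of that theorem forces the fundamental scheme to be a reduced point $\{x\}$, and the $1$-secant lines to $\{x\}$ are precisely the lines through $x$, so $\Sigma = {\PP}^{N-1}(x)$. The map $E \to \Sigma$ is an isomorphism (degree $1$), so the ramification is empty, matching the zero locus of any nonvanishing section of $O_{\{x\}}(-1)$. If $k=2$, main2 gives the disjoint union ${\PP}(V_1) \cup {\PP}(V_2)$ with $V = V_1 \oplus V_2$ and both $\dim V_i \geq 2$; the $2$-secants are the joins of a point of ${\PP}(V_1)$ with a point of ${\PP}(V_2)$, parametrized by the Segre embedding ${\PP}(V_1) \times {\PP}(V_2) \hookrightarrow {\PP}(V_1 \otimes V_2) \subset {\PP}(\Lambda^2 V)$. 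Emptiness of $D$ is witnessed by the two connected components of $X(\Sigma)$: the space $H^0(O_{X(\Sigma)}) \simeq \CC^2$ has $1$-dimensional cokernel over the restriction of constants from ${\PP}^N$, giving a nowhere vanishing section of $O_{X(\Sigma)} = O_{X(\Sigma)}(k-2)$ not cut out by a degree-zero hypersurface.

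For $k=3$, the plan is to recognize $X := X(\Sigma)$ as a projected Severi variety via Zak's theory. By Theorem~\ref{main2}, $X$ is smooth and connected of dimension $n := 2(c-1)$ and codimension $c$ in $\PP^N = \PP^{3c-2}$, and $h^1(J_X(\theta)) \neq 0$. The key numerical identity is $n = 2(N-1)/3$, placing $X$ precisely on the boundary of Zak's linear normality inequality. I would first confirm non-degeneracy: were $X$ contained in a hyperplane $H$, the congruence $\Sigma$ could be realized in the Grassmannian of lines of $H$, contradicting the existence of a line of $\Sigma$ through a general point of ${\PP}^N \setminus H$.

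Next, let $\tilde X \subset \PP^{N+h^1}$ be the re-embedding via the complete linear system $|O_X(\theta)|$, so that $X$ is a projection of $\tilde X$ from a ${\PP}^{h^1 - 1}$. Combining Zak's linear normality bound on $\tilde X$ with the order-one hypothesis on $\Sigma$ (which constrains the projection center to match the trisecant geometry) forces $h^1 = 1$, hence $\tilde X \subset \PP^{3c-1}$ satisfies the Severi identity $\dim \tilde X = 2(3c - 3)/3 = n$. A trisecant to $X$ through a general point of ${\PP}^N$ lifts to a $2$-plane through the projection center meeting $\tilde X$ in three points, and the mobility of this family forces $\mathrm{Sec}(\tilde X)$ to be a hypersurface (a Terracini-type defect by one). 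Zak's classification of Severi varieties then identifies $\tilde X$ as one of the four classical examples (Veronese, Segre ${\PP}^2 \times {\PP}^2$, $G(1,5)$, Cayley plane), so $X = X(\Sigma)$ is a projected Severi variety. The ramification assertion follows from Theorem~\ref{main2} part~4) specialized to $k=3$: $D$ is the zero locus of a section of $O_X(\theta)$ whose non-realization by an actual hyperplane is exactly the obstruction $h^1(J_X(\theta)) \neq 0$.

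The main obstacle is the combined step $h^1(J_X(\theta)) = 1$ together with the recognition that $\mathrm{Sec}(\tilde X)$ is a hypersurface: this is where the order-one trisecant congruence on $X$ has to be converted into the classical Severi secant defect on $\tilde X$. Once that translation is in place, Zak's classification closes the proof without further effort, and the fact that the problem lies exactly at the boundary $n = 2(N-1)/3$ explains why no other smooth fundamental schemes can occur for secant index $3$.
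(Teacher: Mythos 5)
Your proposal is correct and follows essentially the same route as the paper: parts 1) and 2) are read off from the main theorem, and for $k=3$ the identity $N-1=3(c-1)$ together with $H^1(J_{X(\Sigma)/\PP}(1))\neq 0$ places $X(\Sigma)$ at the boundary of Zak's linear normality theorem, whence Zak's classification of Severi varieties identifies it as a projected Severi variety. The paper simply cites Zak at this point, whereas you spell out the re-embedding and secant-defect reduction; that extra detail is consistent with, and a legitimate expansion of, the published argument.
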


\begin{proof} $1)$ and $2)$ are obvious from our main theorem. 

From the same theorem  we see that if $k=3$ then $N-1=3(c-1)$ and
$X(\Sigma)$ is not linearly normal. By Zak's celebrated classification of Severi varieties (\cite{Zak}) we see that $X(\Sigma)$ has to be one of the projected Severi varieties and $\Sigma$ the variety formed by its $3$-secant lines. Note that Iliev and Manivel have proved that 
$\Sigma$ is indeed linearly complete (hence projectively Cohen-Macaulay) in this case.
\end{proof}

Next we recall  the two known order $1$ congruences with smooth fundamental scheme and secant index $4$.

\begin{pro} There exist  two known congruences  with secant index $4$. 

a) $\Sigma \subset G(1,5)$ is formed of the $4$-secant lines to its fundamental scheme, the Palatini $3$-fold $X( \Sigma) \subset \PP^5$.

The ramification locus is cut out in $X( \Sigma)$ by a "virtual quadric".

b)  The second  congruence  $\Sigma \subset G(1,9)$ is formed of the $4$-secant lines to its fundamental scheme, the second Palatini variety (sometimes described under another name) $X(\Sigma)$ a $6$ dimensional smooth variety, cut out by the maximal pfaffian ideal
of a general form $\tau \in H^0(\Lambda^2 \Omega_{\PP^9}(2))$.

\end{pro}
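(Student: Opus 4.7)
The plan is to treat (a) and (b) as parallel explicit constructions, producing each congruence from a Pfaffian-type section and verifying by hand that it satisfies the hypotheses of Theorem \ref{main1} with $k = 4$; once this setup is in place, the remaining geometric properties (including the ramification description in (a)) follow from Theorem \ref{main2}.

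For (a), I would start from the classical description of the Palatini $3$-fold $X \subset \PP^5$ as a degeneracy locus associated to a generic $\tau \in H^0(\Lambda^2 \Omega_{\PP^5}(2))$. One checks that $X$ is smooth of codimension $2$, so Theorem \ref{main2}(2) is consistent with $k = 4$ via $(N-1)/(c-1) = 4/1 = 4$. The congruence $\Sigma \subset G(1,5)$ is defined as the closure of the locus of $4$-secant lines to $X$, and order $1$ is verified by showing that through a general $y \in \PP^5 \setminus X$ there passes exactly one line meeting $X$ in a scheme of length $\geq 4$; this is a classical fact about the Palatini $3$-fold. Once order $1$ is established, a surjection $\Omega_{\PP^5}(2\theta) \rightarrow J_{X}(4\theta)$ matching the exact sequence $(*)$ can be produced directly from a local presentation of $\tau$, and Theorem \ref{main1} then identifies $\Sigma$ with the $4$-secant congruence to $X$. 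For the ramification statement, Theorem \ref{main2}(4) gives $D$ as the zero scheme of a section of $O_X(k-2) = O_X(2)$, and the assertion that this section does not come from a genuine quadric in $\PP^5$ reduces to $H^1(J_X(2)) \neq 0$, which follows from the Pfaffian resolution of the ideal of $X$.

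For (b), I would use the same strategy with data in $\PP^9$. Fix a generic $\tau \in H^0(\Lambda^2 \Omega_{\PP^9}(2))$ and let $X \subset \PP^9$ be the scheme cut out by its maximal Pfaffian ideal. By the Jozefiak-Pragacz structure theorem for Pfaffian ideals, for generic $\tau$ this $X$ is smooth, Cohen-Macaulay and of codimension $3$, so $c - 1 = 2$ and the equality $k = (N-1)/(c-1) = 4$ matches Theorem \ref{main2}(2). The same recipe as in (a) produces the surjection $\Omega_{\PP^9}(2\theta) \rightarrow J_X(4\theta)$ and hence the congruence $\Sigma$, and order $1$ is verified by the same line-through-a-general-point analysis, now using the structure of the maximal Pfaffian locus.

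The main obstacle in both parts is the order $1$ property: Theorem \ref{main1} presupposes it and therefore cannot be invoked to establish it. One must verify directly that through a general point of $\PP^N$ there passes exactly one $4$-secant line to $X$, which is an explicit statement about the rank behavior of $\tau$ along lines of $\PP^N$. In (a) this is a standard classical fact about the Palatini threefold; in (b) it is substantially more delicate and relies on a generic-rank analysis of the Pfaffian degeneracy loci combined with a Bertini-type argument to ensure smoothness of $X$ and to rule out excess components of the candidate congruence.
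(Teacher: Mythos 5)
Your route is genuinely different from the paper's, and the difference lands exactly on the step you yourself flag as the main obstacle. You start from the variety $X$ (degeneracy locus in $\PP^5$, maximal Pfaffian locus in $\PP^9$), define $\Sigma$ as the closure of its $4$-secant lines, and then need to verify order $1$ by counting lines through a general point; you rightly note that Theorem \ref{main1} cannot be invoked for this, but you then leave the count as a sketch (``classical fact'' in (a), ``substantially more delicate'' in (b)), so the central claim of the proposition is not actually established in your version. The paper goes the other way around. Part (a) is disposed of by reference to the example already worked out earlier in the text (the $4$-secant congruence to a Palatini $3$-fold, secant index $4$, fundamental scheme the $3$-fold itself, not quadratically normal). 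For (b) the paper never starts from $X$: a general $\tau\in\Lambda^3 V$ gives by contraction an injection $V^\vee\to\Lambda^2 V$, whose image is a $\PP^9$ in the Pl\"ucker space cutting $G(1,9)$ improperly in a \emph{linear} congruence $\Sigma$, whose fundamental scheme is then identified as the maximal Pfaffian locus of $\tau$. The payoff of that construction is precisely that the order of a linear congruence is automatically $0$ or $1$ (as the paper observes after defining linear congruences), so the enumerative verification that blocks your argument disappears; the value $k=(N-1)/(c-1)=4$ then comes from Theorem \ref{main2}. The same linear-section description exists in case (a) ($\tau\in\Lambda^3 V$ with $\dim V=6$ gives a $\PP^5$ cutting $G(1,5)$ in the Palatini congruence), and it is essentially how the ``classical fact'' you appeal to is proved. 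Your reduction of the ``virtual quadric'' assertion to $H^1(J_X(2))\neq 0$ is consistent with Theorem \ref{main2}(4) and with the paper's remark on quadratic normality, so that part is fine. In short: either carry out the order-$1$ count honestly in your $X$-first setup, or adopt the linear-congruence construction, which is where the proposition's actual content lies.
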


\begin{proof}

We have already seen the case of the $4$-secant lines to a Palatini $3$-fold (which as we know is not quadratically normal).

For b), consider  a general $\tau \in \Lambda^3 V= H^0(\Lambda^2 \Omega_{\PP^9}(2))$. It induces a map $V^\vee \rightarrow \Lambda ^2 V$ which cuts out a linear space in the Plucker space $\PP( \Lambda ^2 V)$. This linear space cuts (improperly)  a linear congruence $\Sigma \subset G(1,9)$ whose fundamental scheme  in $\PP^9$ is the the variety cut out by the maximal pfaffian ideal of
 $\tau$.

\end{proof}

 We conclude with a conjecture  (relating our classification to Hartshorne low codimension conjecture).

 \begin{conj} The congruences listed in the theorem and the proposition form the exhaustive list of Cohen-Macaulay, order $1$,
 irreducible congruences with smooth fundamental scheme.

 \end{conj}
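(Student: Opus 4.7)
The strategy is to combine the numerical and cohomological constraints from Theorem~\ref{main2} with two deep external inputs: Zak's classification of Severi varieties (\cite{Zak}) and Hartshorne's low codimension conjecture. I would organize the argument case by case on the secant index $k$.

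For $k \leq 3$ the content is essentially already in the earlier results. For $k = 1$ and $k = 2$, Theorem~\ref{main2}(1) forces $X(\Sigma)$ to be a point or a disjoint union of two complementary linear subspaces, and Theorem~\ref{main1}(3) then recovers $\Sigma$ uniquely from $X(\Sigma)$. For $k = 3$, Theorem~\ref{main2} supplies a smooth connected $X(\Sigma) \subset \PP^N$ with $N - 1 = 3(c-1)$ and $H^1(J_{X(\Sigma)/\PP}(1)) \neq 0$; this non-linear-normality in the right dimensional range is precisely Zak's hypothesis, so \cite{Zak} forces $X(\Sigma)$ to be one of the four projected Severi varieties, and Theorem~\ref{main1}(3) identifies $\Sigma = \mathrm{Sec}_3(X(\Sigma))$.

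The heart of the conjecture is $k \geq 4$. Here Theorem~\ref{main2}(2) gives $c = (N-1)/k + 1$, which is small relative to $N$, and Theorem~\ref{main2}(4) combined with the fact that complete intersections satisfy $H^1(J_X(m)) = 0$ for every $m$ shows that $X(\Sigma)$ cannot be a complete intersection. Hartshorne's conjecture would then force $c \geq N/3$, which together with $N - 1 = k(c-1)$ yields
\[
c \leq \frac{k-1}{k-3}.
\]
For $k \geq 6$ this forces $c = 1$, impossible since $c \geq 2$; for $k = 5$ it forces $(N,c) = (6,2)$, ruled out by the codimension-two version of Hartshorne's conjecture for $\PP^N$ with $N \geq 6$ (which would make $X(\Sigma)$ a complete intersection, again a contradiction); and only the pairs $(k,c,N) = (4,2,5)$ and $(4,3,9)$ survive.

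To finish in the two residual classes, I would use the Remark following Theorem~\ref{main2}: $X(\Sigma)$ is the zero locus of a section of $\Omega_{\PP^N}^\vee(k-2) = \Omega_{\PP^N}^\vee(2)$ degenerating in codimension $c$. The next step is a classification of such degenerate sections under $\mathrm{PGL}$: in $\PP^5$ one expects a single orbit, the Palatini construction from a generic skew form on $V^\vee$, recovering the Palatini threefold; in $\PP^9$ one aims to match the description by the maximal pfaffians of a generic $\tau \in H^0(\Lambda^2 \Omega_{\PP^9}(2))$ given in the proposition, recovering the second Palatini variety. Theorem~\ref{main1}(3) then realizes $\Sigma$ as the congruence of $4$-secant lines to $X(\Sigma)$, concluding the list. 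The main obstacle is twofold. First, the entire skeleton is conditional on Hartshorne's low codimension conjecture and its codimension-two refinement, both famously open, so the conjecture can at best be proved modulo these inputs. Second, even granted Hartshorne, the two boundary cases $(5,2)$ and $(9,3)$ sit exactly on the Hartshorne threshold and are not themselves handled by it; one must instead carry out a direct orbit analysis of degenerate sections of $\Omega_{\PP^N}^\vee(2)$ to rule out hypothetical non-Palatini examples, and this orbit classification is, in my view, the most delicate and least automatic step of the argument.
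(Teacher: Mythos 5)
The statement you are trying to prove is labelled a \emph{conjecture} in the paper, and the paper offers no proof of it. What the paper does provide, in the two paragraphs following the conjecture, is exactly the Hartshorne-conditional numerology that forms the skeleton of your proposal: $X(\Sigma)$ is not projectively normal by Theorem~\ref{main2}(4), hence not a complete intersection, so Hartshorne's conjecture ($N>3c$ implies complete intersection) combined with $N-1=k(c-1)$ restricts the unknown cases to $k=4$ with $N=5$ or $N=9$, and $k=5$ with $N=6$. Your inequality $c\le (k-1)/(k-3)$ is a correct repackaging of this, and your treatment of $k\le 3$ via Theorem~\ref{main2}(1)--(2) and Zak's classification matches the paper's Classification Theorem. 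Up to that point you are faithfully reconstructing the paper's own (explicitly inconclusive) discussion.

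The gap is in everything after that, and it is a genuine one: your argument does not close the three surviving cases, and neither does the paper. For $(k,N,c)=(5,6,2)$ you invoke the codimension-two Hartshorne conjecture for $N\ge 6$; note that the paper's stated form of Hartshorne ($N>3c$) does \emph{not} cover $N=6$, $c=2$, which is precisely why the paper lists this as a possible unknown case rather than excluding it --- you are importing an additional open conjecture beyond the one the paper allows itself. For the two $k=4$ cases, the step ``classify degenerate sections of $\Omega_{\PP^N}^\vee(2)$ up to $\mathrm{PGL}$ and show there is a single orbit'' is stated as an expectation, not carried out; moreover the correspondence in the paper's Question between sections of $\Omega_{\PP^N}^\vee(k-2)$ and order~$1$ congruences is itself only a necessary condition (the closure condition $(L,x)\in\tilde{\PP}\Leftrightarrow(L,y)\in\tilde{\PP}$ must also hold), so even a complete orbit classification would require checking which orbits actually yield congruences. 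In short: your proposal is a reasonable programme, honestly flagged as conditional, but it does not constitute a proof, and the statement remains a conjecture exactly as the paper presents it.
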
 
We recall here Hartshorne's celebrated conjecture for low codimension smooth varieties:  if $N>3c$, a smooth variety of codimension $c$ in $\PP^N$ is a complete intersection.

From our main theorem, we know that if $\Sigma \subset G(1,N)$ has smooth fundamental scheme, then $X(\Sigma)$ is not projectively normal, hence not a complete intersection.

An elementary computation shows that if Hatshorne's conjecture is true, the only possible unknown Cohen-Macaulay, order $1$, irreducible congruences with
smooth fundamental scheme would have the following invariants:

- $k=4$ and  $N=5$ or $N=9$, precisely the invariants of  the congruences of $4$-secant lines to the two Palatini varieties,

- $k=5$ and $N=6$, in other words $X(\Sigma)$ would be a smooth codimension $2$ variety in $\PP^6$ not cubically normal.

\vspace{0,5cm}

Christian Peskine, Universit{\'e} Pierre et Marie Curie

Institut de MathŽmatiques de Jussieu-Paris Rive Gauche

christian.peskine@imj-prg.fr

\end{document}